\newcommand{\trid}{\triangleright}
\newcommand{\toba}{\mathfrak{B}}
\newcommand{\GL}{\mathbf{GL}}
\newcommand{\PGL}{\mathbf{PGL}}
\newcommand{\Sym}{\mathbb{S}}
\newcommand{\img}{\mathrm{im}\,}
\newcommand{\sgn}{\mathrm{sgn}}
\newcommand{\id}{\mathrm{id}}
\newcommand{\N}{{\mathbb N}}
\newcommand{\Z}{{\mathbb Z}}
\newcommand{\C}{{\mathbb C}}
\newcommand{\B}{{\mathbb B}}
\theoremstyle{plain}
\newtheorem{thm}{Theorem}[section]
\newtheorem{lem}[thm]{Lemma}
\newtheorem{rem}[thm]{Remark}
\newtheorem{pro}[thm]{Proposition}
\newtheorem{exa}[thm]{Example}
\newtheorem{defn}[thm]{Definition}
\begin{document}
\title[Nichols algebras and twisting]{Nichols algebras associated to the transpositions of the symmetric group are twist-equivalent}
\author{L. Vendramin}

\thanks{This work was partially supported by CONICET}

\address{Departamento de Matem\'atica -- FCEyN,
Universidad de Buenos Aires, Pab. I -- Ciudad Universitaria (1428)
Buenos Aires -- Argentina}

\address{Instituto de Ciencias, Universidad de Gral. Sarmiento, J.M. Gutierrez
1150, Los Polvorines (1653), Buenos Aires -- Argentina}
\email{lvendramin@dm.uba.ar}

\subjclass[2010]{16T05; 16T30; 17B37}

\begin{abstract}
	Using the theory of covering groups of Schur we prove that the two
	Nichols algebras associated to the conjugacy class of 
	transpositions in $\Sym_n$ are equivalent by twist and hence they 
	have the same Hilbert series. These algebras
	appear in the classification of pointed Hopf algebras and in the study of
	quantum cohomology ring of flag manifolds.
\end{abstract}

\maketitle

\section{Introduction}

Nichols algebras play a fundamental role in the classification of
finite-dimensional pointed Hopf algebras over $\C$.  They are graded Hopf
algebras 
\[
\toba(V)=\bigoplus_{n \geq 0}\toba^n(V)=\C\oplus V\oplus\toba^2(V)\oplus\cdots
\]
in the category of Yetter-Drinfeld modules over a Hopf algebra $H$, and they are
uniquely determined by $V$, the homogeneous component of $\toba(V)$ of degree one.

Let $H$ be the group algebra of a finite group $G$.  In the study of Nichols
algebras a basic question is to describe those Yetter-Drinfeld modules $V$ over
$H$ for which $\toba(V)$ is finite-dimensional. Whereas deep results were found
for the case where $G$ is abelian, \cites{MR2207786,MR2462836,MR2630042}, the 
situation is widely unknown for non-abelian groups $G$. 


The first examples of finite-dimensional pointed Hopf algebras with non-abelian
coradical appeared in \cite{MR1800714}, as bosonizations of Nichols
algebras related to the transpositions in $\Sym_3$ and $\Sym_4$.  The analogous
Nichols algebra over $\Sym_5$ was computed by Gra\~na, see \cite{zoo}.  These
Nichols algebras are computed from the conjugacy class of transpositions and a
$2$-cocycle (cocycle for short)
associated to this conjugacy class. The cocycles arise from a cohomology theory
defined for racks (see for example \cites{MR1800709,MR1990571,MR1994219}).
In \cite[Theorem 1.1]{ampa} it is proved that for all $n\in\mathbb{N}$, $n\geq4$, 
there are precisely two rack $2$-cocycles 
associated to the conjugacy class of transpositions in $\Sym_n$ that might 
have finite-dimensional Nichols algebras. 
Explicitly, one of these cocycles is the constant cocycle $-1$. The other one
is the cocycle given by 
\begin{equation}
    \label{eq:FK_cocycle}
\chi(\sigma,\tau)=\begin{cases}
	1 & \text{ if }\sigma(i)<\sigma(j),\\
	-1 & \text{ if }\sigma(i)>\sigma(j)\\
\end{cases}
\end{equation}
for all transpositions $\sigma$ and $\tau=(i\;j)$ with $i<j$.
For all $n\in\{4,5\}$ the Nichols algebra associated to the
conjugacy class of transpositions in $\Sym_n$ and any of the two cocycles $-1$, $\chi$ is 
finite-dimensional.  Moreover, both of these algebras have the
same Hilbert series. It is not known whether these algebras are finite-dimensional
for $n>5$.  The main result of this work is to connect these two algebras by
\emph{twisting the cocycle}. More precisely, we prove that the constant cocycle
$-1$ and $\chi$ are equivalent by twist. This gives an affirmative answer to a
question due to Andruskiewitsch,  see  \cite[Question 7]{AFGV}. 
However, the problem arose already earlier in the literature. For example, 
in the last paragraph of \cite{MR2106930}, Majid discusses the relationship 
between these two algebras and the related quadratic algebras.
To reach our main result, we use the existence of projective
representations of $\Sym_n$. Projective representations of $\Sym_n$ were
originally studied by Schur in 1911, see \cite{MR1820589} for an English
translation of his fundamental paper about this subject.
As a corollary of our result we obtain that for all
$n\geq4$ both Nichols algebras associated to the conjugacy class of transpositions of $\Sym_n$ 
have the same Hilbert series.

We recall briefly another application for Nichols algebras which may have
connections with the main result of this work. In \cite{MR0051508}, Borel
identified the cohomology ring of a flag manifold with $S_W$, the algebra of
coinvariants of the associated Coxeter group $W$. This admits certain
divided-difference operators which create classes of Schubert manifolds.  In
\cite{MR1667680}, Fomin and Kirillov introduced a new model for the Schubert
calculus of a flag manifold, realizing $S_W$ as a commutative subalgebra of a
noncommutative quadratic algebra $\mathcal{E}_W$, when $W$ is a symmetric
group. 
In \cite{MR2209265}, Bazlov proved that Nichols algebras provide the correct setting
for this model for Schubert calculus on a flag manifold.
It is an open problem whether the Nichols algebra associated to $\chi$ coincides 
with the quadratic algebra $\mathcal{E}_W$ \cites{MR1800714, MR2106930}.

\section{Preliminaries}
\subsection{Racks and cohomology}

We briefly recall basic facts about racks, see \cite{MR1994219} for more
information and references.  

\medskip
A \textit{rack} is a pair $(X,\triangleright)$
where $X$ is a non-empty set and $\triangleright:X\times X\to X$ is a function,
such that the map $x\mapsto i\triangleright x$ is bijective for all $i\in X$, 
and $i\triangleright(j\triangleright k)=(i\triangleright
j)\triangleright(i\triangleright k)$ for all $i,j,k\in X$.
A \textit{subrack} of $X$ is a non-empty subset $Y\subseteq X$ such
that $(Y,\triangleright)$ is also a rack.

\begin{exa}
A group $G$ is a rack with $x\triangleright y=xyx^{-1}$ for all $x,y\in G$.  If a subset
$X\subseteq G$ is stable under conjugation by $G$, then it is a subrack of $G$.
In particular, the conjugacy class of transpositions in $\Sym_n$ is a rack; it
will be denoted by $X_n$. 
\end{exa}

In this work we are interested only in racks which can be realized as a finite
conjugacy class of a group.  Let $X$ be such a rack. A map $q:X\times
X\to\C^\times$ is a \emph{$2$-cocycle} if and only if
\[
q_{x,y\trid z}q_{y,z}=q_{x\trid y,x\trid z}q_{x,z}
\]
for all $x,y,z\in X$. We write $Z_R^2(X, \C^\times)$ for
the set of all rack $2$-cocycles. Let $q,q'\in Z_R^2(X,\C^\times)$. We write
$q\sim q'$ if there exists $\gamma:X\to\C^\times$ such that 
\[
q_{x,y}=\gamma_{x\trid y}^{-1}q'_{x,y}\gamma_{y}
\]
for all $x,y\in X$. Since $\sim$ is an equivalence relation 
and
$Z_R^2(X,\C^\times)$ is stable under $\sim$ 
it is possible to define the \emph{second rack cohomology group} as
\[
H_R^2(X,\C^\times)=Z_R^2(X,\C^\times)/\sim.
\]
All these notions are based on the abelian cohomology theory of racks proposed
independently in \cite{MR1990571}, \cite{MR1800709}. For more details about
cohomology theories of racks see \cite[\S4]{MR1994219}.

\subsection{Nichols algebras}

We refer to \cite{MR1913436} for an introduction to Yetter-Drinfeld modules and
Nichols algebras.  

\medskip
Let $n\in\N$. We recall the well-known presentation
of the braid group $\B_n$ by generators and
relations. The group $\B_n$ has generators
$\sigma_1,\dots,\sigma_{n-1}$ and relations
\begin{align*}
	\sigma_i\sigma_j\sigma_i = \sigma_j\sigma_i\sigma_j &\quad\text{for $\vert i-j\vert=1$};\\
	\sigma_i\sigma_j = \sigma_j\sigma_i &\quad\text{for $\vert i-j\vert >1$}.
\end{align*}
There exists a canonical projection $\B_n\to\Sym_n$ that admits the so-called
Matsumoto section $\mu:\Sym_n\to\B_n$ such that $\mu\left( (i\;i+1)\right)=\sigma_i$. 
This section satisfies the following: $\mu(xy)=\mu(x)\mu(y)$
for any $x,y\in\Sym_n$ such that $l(xy)=l(x)+l(y)$, where $l$ is the length
function on $\Sym_n$.

\medskip
Let $V$ be a vector space over $\C$ and let $c\in\GL(V\otimes V)$. The map $c$
is a solution of the \emph{braid equation} if and only if 
\[
(c\otimes\id)(\id\otimes c)(c\otimes\id)=(\id\otimes c)(c\otimes\id)(\id\otimes c).
\]
If $c$ is a solution of the braid equation, we say that $(V,c)$ is a
\emph{braided vector space}.  A solution of the braid equation 
induces a representation $$\rho_n:\B_n\to\GL(V^{\otimes n})$$ 
defined by 
\[
\rho_n(\sigma_i)=\id^{\otimes(i-1)}\otimes c\otimes\id^{\otimes(n-i-1)}
\]
for $1\leq i\leq n-1$. Let 
\[
Q_n=\sum_{\sigma\in\Sym_n}\rho_n(\mu(\sigma)):V^{\otimes n}\to V^{\otimes n}
\]
for $n\geq2$, $Q_0=\id_\C$ and $Q_1=\id_V$.
The \emph{Nichols algebra} associated to the braided vector space $V$ is 
\[
\toba(V)=T(V)/\oplus_{n\geq2}\ker Q_n\simeq\oplus_{n\geq0}\img Q_n.
\]
By \cite[Theorem 4.14]{MR1994219}, Yetter-Drinfeld modules over group algebras 
can also be studied in terms of
racks and rack 2-cocycles. Therefore we are interested in Nichols algebras of 
braided vector spaces arising from racks and $2$-cocycles. 

\medskip
Let $(X,\trid)$ be a rack and let $q\in Z_R^2(X,\C^\times)$. We consider $V=\C
X$, the vector space with basis $x\in X$, and define $c:V\otimes V\to V\otimes
V$ as 
\[
c(x\otimes y)=q_{x,y} x\trid y\otimes x.
\]
Then $c$ is a solution of the braid equation. The Nichols algebra associated to
the pair $(X,q)$ is the Nichols algebra of the braided vector space $(V,c)$. 
This algebra will be denoted by $\toba(X,q)$.

\medskip
Recall that $X_n$ is defined as the rack associated to the conjugacy class of
transpositions in $\Sym_n$.  
In \cite[Theorem 1.1]{ampa} it is proved that 
there are two rack $2$-cocycles associated 
to $X_n$ that might have a finite-dimensional Nichols algebra. 
One is the constant $2$-cocycle $-1$.
The other is the $2$-cocycle $\chi$ given by Equation \eqref{eq:FK_cocycle}.

\begin{rem}
It can be checked directly that the $2$-cocycles $-1$ and $\chi$ associated to
the rack $X_3$ are cohomologous. Then the Nichols algebras $\toba(X_3,\chi)$
and $\toba(X_3,-1)$ are isomorphic and hence they have the same Hilbert series.
\end{rem}

For all $m\in\N$ let $(m)_t=1+t+\cdots+t^{m-1}\in\Z[t]$.

\begin{exa}
	The Nichols algebras $\toba(X_4,-1)$ and $\toba(X_4,\chi)$ 
	have both dimension $576$. In both cases the Hilbert series is 
	\[
	\mathcal{H}_4(t)=(2)^2_t (3)^2_t (4)^2_t.
	\]
	These algebras appeared first in
	\cites{MR1667680,MR1800714}. For more information about these 
	algebras see \cite[Theorem 2.4 and Proposition 2.5]{GG} or 
	\cite[Proposition 5.11]{GHV}.
\end{exa}

\begin{exa}
	The Nichols algebras $\toba(X_5,-1)$ and $\toba(X_5,\chi)$ have both
	dimension $8294400$. In both cases the Hilbert series is 
	\[
	\mathcal{H}_5(t)=(4)^4_t (5)^2_t (6)^4_t.
	\]
	These algebras were first computed by
	Gra\~na, \cite{zoo}. For more information about these algebras 
	see \cite[Theorem 2.4 and Proposition 2.5]{GG} or \cite[Proposition 5.13]{GHV}.
\end{exa}

\subsection{Twisting}

In \cite[Section 3.4]{AFGV} it is shown how to relate two rack $2$-cocycles by
a twisting in such a way that some properties of the corresponding Nichols 
algebras are preserved. 
This method is based on the twisting method of \cite{MR1298746} and its
relationship with the bosonization given in \cite{MR1711340}.  

\medskip
Let $X$ be a subrack of a conjugacy class of a group $G$. Let $q$ be a rack
$2$-cocycle on $X$ and let $\phi$ be a group $2$-cocycle on $G$.  Define
$q^\phi: X\times X\to \C^\times$ by
\begin{equation}
\label{eqn:twisting}
q^\phi_{x,y} = \phi(x, y)\phi(x\trid y, x)^{-1}\,q_{x,y}
\end{equation}
for $x,y\in X$.

\begin{rem}
Let $X$ be a rack and $q\in H_R^2(X,\C^\times)$.  For a map $\phi: X\times X\to
\C^\times$ define $q^\phi$ by Equation \eqref{eqn:twisting}.  Then $q^\phi$
is a rack $2$-cocycle if and only if 
\begin{multline}
\label{eqn:condition}
\phi(x,z)\phi(x\trid y, x\trid z)\phi(x\trid(y\trid z),x)\phi(
y\trid z, y)
\\= \phi(y,z)\phi(x, y\trid z)\phi(x\trid(y\trid z), x\trid y)\phi(x\trid z, x)
\end{multline}
for any $x,y,z\in X$. Thus, if $X$ is a subrack of a group $G$ and
$\phi$ is a group 2-cocycle, $\phi\in Z^2(G, \C^{\times})$, then  
$\phi\vert_{X\times X}$ 
satisfies Equation \eqref{eqn:condition}. 
See \cite[Remark 3.10]{AFGV} or \cite[Theorem 7.1]{MR1990571} for a similar result. 
\end{rem}

\begin{lem}
\label{lem:series}
The Hilbert series of $\toba(X, q)$ and $\toba(X, q^\phi)$ are equal.
\end{lem}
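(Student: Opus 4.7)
The plan is to construct, for each $n\ge 2$, a linear isomorphism $F_n$ of $V^{\otimes n}$ (with $V=\C X$) intertwining the braid group representations $\rho_n^q$ and $\rho_n^{q^\phi}$ associated respectively to $c_q$ and $c_{q^\phi}$. Once this is available, summing over the Matsumoto images $\mu(\sigma)$ for $\sigma\in\Sym_n$ yields $F_n\circ Q_n^q=Q_n^{q^\phi}\circ F_n$, so $F_n$ restricts to a linear isomorphism $\ker Q_n^q\cong\ker Q_n^{q^\phi}$; this gives $\dim\toba^n(X,q)=\dim\toba^n(X,q^\phi)$ for every $n$ and hence equality of the Hilbert series. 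The natural candidate is the diagonal operator
\[
F_n(x_1\otimes\cdots\otimes x_n)=\Bigl(\prod_{j=1}^{n-1}\phi(x_j,x_{j+1}x_{j+2}\cdots x_n)\Bigr)^{-1} x_1\otimes\cdots\otimes x_n,
\]
where the products $x_{j+1}\cdots x_n$ are computed inside $G$; this is invertible because $\phi$ is $\C^\times$-valued.

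To verify the intertwining I would work one braid generator at a time. The action of $\sigma_i$ replaces $(x_i,x_{i+1})$ by $(x_i\trid x_{i+1},x_i)$ and introduces the scalar $q_{x_i,x_{i+1}}$. Because $(x_i\trid x_{i+1})x_i=x_ix_{i+1}$ in $G$, every partial product $x_k\cdots x_n$ with $k\le i$ or $k>i+1$ is unchanged, so only the $j=i$ and $j=i+1$ factors in the product defining $F_n$ are affected. Two applications of the group cocycle identity $\phi(a,b)\phi(ab,c)=\phi(a,bc)\phi(b,c)$, with $(a,b,c)=(x_i,x_{i+1},x_{i+2}\cdots x_n)$ and with $(a,b,c)=(x_i\trid x_{i+1},x_i,x_{i+2}\cdots x_n)$, should collapse the ratio of new to old $F_n$-scalars to exactly $\phi(x_i,x_{i+1})\phi(x_i\trid x_{i+1},x_i)^{-1}$, which is precisely the correction factor appearing in~\eqref{eqn:twisting}. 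This gives intertwining for each $\sigma_i$, and hence for every element of $\B_n$.

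The main obstacle is identifying the correct ansatz for $F_n$. A naive diagonal correction built only from nearest-neighbour values $\phi(x_j,x_{j+1})$ does not succeed, because the local braid action is coupled to the entire tail $x_{i+2}\cdots x_n$ through the conjugation $x_i\trid(\,\cdot\,)$. The product formula above works precisely because the long products $x_kx_{k+1}\cdots x_n$ are preserved by every $\sigma_i$ with $i\ge k$, so that the two cocycle identities above involve the \emph{same} third argument and all bulk factors cancel, leaving only the expected twisting residue.
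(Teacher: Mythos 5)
Your argument is correct, and it takes a genuinely different route from the paper: the paper simply cites \cite[Section 3.4]{AFGV}, where the statement is obtained by viewing $\toba(X,q)\#\C G$ and applying a Drinfeld twist of the bosonization by the group $2$-cocycle $\phi$ (via \cite{MR1298746} and \cite{MR1711340}), so that $\toba(X,q^\phi)$ is the image of $\toba(X,q)$ under a graded equivalence. Your proof instead exhibits an explicit diagonal intertwiner $F_n$ between the two braid group representations, and I checked the key computation: writing $\Phi(x)=\prod_{j}\phi(x_j,x_{j+1}\cdots x_n)$ and letting $y$ denote the tuple obtained from $x$ by applying $\sigma_i$, the identity $(x_i\trid x_{i+1})x_i=x_ix_{i+1}$ ensures all tail products with index $\le i$ or $\ge i+2$ are preserved, and the two instances of the cocycle identity you name give $\Phi(x)/\Phi(y)=\phi(x_i,x_{i+1})\phi(x_i\trid x_{i+1},x_i)^{-1}$, which is exactly the ratio $q^\phi_{x_i,x_{i+1}}/q_{x_i,x_{i+1}}$ needed for $F_n\rho_n^{q}(\sigma_i)=\rho_n^{q^\phi}(\sigma_i)F_n$; conjugating the quantum symmetrizers then matches kernels and hence Hilbert series. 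Your observation that a nearest-neighbour ansatz fails and that the full tail products are needed is the right diagnosis. What the two approaches buy: yours is elementary and self-contained, needing no Hopf-algebraic machinery, and it even shows the symmetrizers $Q_n^q$ and $Q_n^{q^\phi}$ are conjugate by an explicit diagonal matrix; the categorical twisting argument is less computational and yields more structure (a graded algebra-level correspondence transporting ideals and relations, not merely equality of dimensions). Note that both arguments, yours included, genuinely use that $\phi$ extends to a group $2$-cocycle on all of $G$ (you evaluate $\phi$ on long products lying outside $X$), which is consistent with the hypotheses in force when the lemma is stated, but would not apply verbatim to an arbitrary $\phi:X\times X\to\C^\times$ satisfying only Equation \eqref{eqn:condition}.
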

\begin{proof}
See \cite[Section 3.4]{AFGV}.
\end{proof}

\begin{defn}
\label{defn:twisting} 
The $2$-cocycles $q$ and $q'$ on $X$ are \emph{equivalent by twist} if there
exists $\phi: X\times X\to \C^\times$ such that $q' = q^\phi$ as in
\eqref{eqn:twisting}.
\end{defn}

\section{The Schur cover of $\Sym_n$}

\subsection{Projective representations and covering groups}

We review some aspects of Schur's theory of projective
representations and construct the Schur cover of $\Sym_n$. See \cites{MR1486039,
MR1200015, MR1820589} for details. 

\medskip
A \emph{projective representation} of a finite group $G$ is a group
homomorphism $G\to\mathbf{PGL}(V)$. Equivalently, such a representation may be
viewed as a map $f:G\to\mathbf{GL}(V)$ such that 
\[
f(x)f(y)=\phi(x,y)f(xy)
\]
for all $x,y\in G$ and suitable scalars $\phi(x,y)\in\C^\times$. The map $G\times G\to\C^\times$, 
$(x,y)\mapsto \phi(x,y)$, is called a \emph{factor set}. The associativity of
the group $\GL(V)$ implies the \emph{$2$-cocycle condition} of
the factor set $\phi$:
\begin{equation}
\phi(x,y)\phi(xy,z)=\phi(x,yz)\phi(y,z)
\end{equation}
for all $x,y,z\in G$.

Two projective representations $\rho_1:G\to\GL(V_1)$ and $\rho_2:G\to\GL(V_2)$ are
\emph{equivalent} if there exists a $\C$-linear isomorphism  
$S:V_1\to V_2$ and a map $b:G\to\C^\times$ 
such that
\[
b(x)S\rho_1(x)S^{-1}=\rho_2(x)
\]
for all $x\in G$. Two factor sets $\phi$ and $\phi'$ are \emph{equivalent} if
they differ only by a factor $b_xb_y/b_{xy}$ for some $b:G\to\C^\times$. The
\emph{Schur multiplier} of $G$ is the abelian group of factor sets modulo
equivalence. It is isomorphic to the second cohomology group
$H^2(G,\C^\times)$.

Recall that a central extension of $G$ is a pair $(E,p)$, where $p:E\to G$ is
a surjective group homomorphism such that $\ker p$ is contained in the center
of the group $E$. Schur proved that every finite group $G$ has a central
extension $(E,p)$ with the property that every projective representation $\rho$
of $G$ lifts to an ordinary representation $\bar\rho$ of $E$ such that the
diagram
\begin{equation*}
\xymatrix{E\ar@{->}^{\bar\rho}[0,2]\ar@{->}[1,0]_{p}&
&\GL(V)\ar@{->}[1,0]^{} \\ G\ar@{->}^{\rho}[0,2]& & \PGL(V).}
\end{equation*}
commutes.

There exist extensions with $\ker p\simeq H^2(G,\C^\times)$. Moreover,
$H^2(G,\C^\times)$ is the unique minimal possibility for $\ker p$. These
minimal central extensions of $G$ are called \emph{Schur covering groups} of
$G$.

\medskip
We recall that $\Sym_n$ has a Coxeter group presentation
\begin{multline*}
\Sym_n = \langle s_1,\cdots,s_{n-1}\mid s_i^2=(s_js_{j+1})^3=(s_ks_l)^2=1\rangle\\
(1\leq i\leq n-1,\,1\leq j\leq n-2,\,k\leq l-2),
\end{multline*}
where $s_1,\cdots,s_{n-1}$ denote the adjacent transpositions of $\Sym_n$ defined by
\[
s_1=(1\,2),\, s_2=(2\,3),\dots,s_{n-1}=(n-1\,n).
\]

\begin{thm}
\label{thm:schur}
Given $n\geq4$, define the group $T_n$ as follows
\begin{multline*}
T_n = \langle z,t_1,\cdots,t_{n-1}\mid t_i^2=(t_jt_{j+1})^3=1,\;(t_kt_l)^2=z,\;z^2=[z,t_i]=1\rangle\\
(1\leq i\leq n-1,\,1\leq j\leq n-2,\,k\leq l-2).
\end{multline*}
Then $T_n$ is a Schur covering group of $\Sym_n$. Therefore, there
exists a central extension
	\[
	0\to A\xrightarrow{i} T_n\xrightarrow{p}\Sym_n\to1,
	\]
where $A=\langle z\rangle$.
\end{thm}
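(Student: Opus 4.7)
The plan is to build a surjective homomorphism $p:T_n\to\Sym_n$ whose kernel is exactly the central subgroup $\langle z\rangle$ of order two, and then verify that this central extension is minimal in Schur's sense. First I would define $p$ on generators by $p(t_i)=s_i$ and $p(z)=1$. Each defining relation of $T_n$ is sent to a valid relation of $\Sym_n$: the relation $(t_kt_l)^2=z$ becomes the Coxeter relation $(s_ks_l)^2=1$, and the auxiliary relations involving $z$ become trivial. Hence $p$ is a well-defined surjective homomorphism. The relations $[z,t_i]=1$ make $A:=\langle z\rangle$ central in $T_n$, and clearly $A\subseteq\ker p$. Conversely, imposing $z=1$ in the presentation of $T_n$ yields exactly the Coxeter presentation of $\Sym_n$, so $T_n/A\cong \Sym_n$ and therefore $\ker p=A$.

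The main obstacle is to show that $z$ itself is not trivial in $T_n$; the relations only guarantee $z^2=1$. To rule out $z=1$ I would exhibit a concrete linear representation of $T_n$ in which $z$ acts by $-\id$. The natural source is the basic spin representation of $\Sym_n$ built from the Clifford algebra on $n-1$ anti-commuting generators: mapping each $t_i$ to an appropriate normalized difference of Clifford generators gives involutions satisfying the braid relation and also $(t_kt_l)^2=-\id$ whenever $k\le l-2$. Extending by $z\mapsto -\id$ produces a linear representation of $T_n$ in which $z$ acts non-trivially, which forces $|A|=2$.

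To finally conclude that $T_n$ is a Schur cover I would use two further classical inputs. First, Schur's original computation gives $H^2(\Sym_n,\C^\times)\cong\Z/2\Z$ for $n\ge 4$, so $|A|$ matches the order of the Schur multiplier. Second, $A$ must lie in the commutator subgroup $[T_n,T_n]$: the relation $t_i^2=1$ gives $t_i=t_i^{-1}$, hence for any $k\le l-2$ one has
\[
z=(t_kt_l)^2=t_kt_lt_k^{-1}t_l^{-1}=[t_k,t_l]\in[T_n,T_n].
\]
Combined with the central extension $1\to A\to T_n\to\Sym_n\to 1$ from the first paragraph, these two conditions identify $T_n$ as a minimal central extension realizing the Schur multiplier, i.e.\ as a Schur covering group of $\Sym_n$.
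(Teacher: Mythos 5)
Your argument is essentially correct, but it takes a genuinely different route from the paper: the paper does not prove this theorem at all, it simply cites Karpilovsky, \emph{The Schur multiplier}, Theorem 2.12.3. What you supply instead is the standard direct verification: the presentation check showing $T_n/\langle z\rangle\cong\Sym_n$, a spin representation forcing $z\neq 1$, and the stem-extension criterion ($A\subseteq Z(T_n)\cap[T_n,T_n]$ together with $|A|=|H^2(\Sym_n,\C^\times)|$) to conclude that $T_n$ is a covering group. This buys a reader a self-contained picture of why $T_n$ really is a nontrivial double cover, at the cost of still importing two classical black boxes (Schur's computation $H^2(\Sym_n,\C^\times)\cong\Z/2\Z$ for $n\geq4$, and the theorem that stem extensions of maximal order have the projective lifting property --- which is the actual content of the definition of covering group used in the paper). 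Two small points to tighten: (i) your Clifford construction should use $n$ anticommuting generators $e_1,\dots,e_n$ with $e_i^2=1$, sending $t_j\mapsto\tfrac{1}{\sqrt2}(e_j-e_{j+1})$; one then checks $\sigma_j^2=\id$, $(\sigma_j\sigma_k)^2=-\id$ for $|j-k|\geq2$, and $u=\sigma_j\sigma_{j+1}$ satisfies $u+u^{-1}=-\id$, hence $u^2+u+1=0$ and $u^3=\id$ --- with only $n-1$ generators the phrase ``difference of Clifford generators'' does not quite produce $n-1$ images, so either adjust the count or use the asymmetric normalization from the literature; (ii) the computation $z=(t_kt_l)^2=[t_k,t_l]$ is correct and is exactly what makes the extension a stem extension rather than, say, $\Sym_n\times\Z/2\Z$. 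With those details in place your proof is sound and arguably more informative than the citation in the paper.
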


\begin{proof}
\cite[Theorem 2.12.3]{MR1200015}.
\end{proof}

\subsection{Transpositions in $T_n$}

We want to study the lifting of the conjugacy class of transpositions in
$\Sym_n$.  See \cite[Section 3]{MR2562037} for a detailed description about
permutations in $T_n$. 

\begin{rem}
\label{rem:conjugation}
Let $t\in T_n$. For any $\sigma\in\Sym_n$ we have that
$p^{-1}(\sigma)=\{\bar\sigma,\bar\sigma z\}$.  Since the involution $z$ is a central
element of $T_n$, the group $\Sym_n$ acts on $T_n$ by conjugation:
\[
\sigma\trid t=\bar\sigma t(\bar\sigma)^{-1}=(\bar\sigma z)t(\bar\sigma z)^{-1}.
\]
Therefore it is possible to write the conjugation in $T_n$ as $\sigma\trid
t=\sigma t\sigma^{-1}$, where $t\in T_n$ and $\sigma\in\Sym_n$. 
\end{rem}

\begin{defn}
\label{defn:permutations}
For $i,j\in\N$ such that $1\leq i,j\leq n$, $i\ne j$, let $[i\;j]$ be an element of $T_n$
defined inductively as
\begin{align*}
[i\;i+1]&=t_i,\\
[i\;j]&=t_i\triangleright [i+1\;j]z,\quad\text{for $i+1<j$},\\
[j\;i]&=[i\;j]z.
\end{align*}
\end{defn}

\begin{lem} 
	\label{lem:general} 
	Let $i,j,k\in\{1,...,n\}$. Then $s_k\trid [i\;j]=[s_k(i)\;s_k(j)]z$.
\end{lem}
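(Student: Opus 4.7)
The plan is to proceed by induction on $\lvert j-i\rvert$, after first reducing to the case $i<j$ via the relation $[j\;i]=[i\;j]\,z$ together with the centrality of $z$ (so that conjugation by any element of $T_n$ fixes $z$, and in particular $s_k\triangleright z=z$). Since conjugation in $T_n$ descends along $p:T_n\to\Sym_n$ to ordinary conjugation, we already know $s_k\triangleright[i\;j]$ lies in the fibre $\{[s_k(i)\;s_k(j)],\,[s_k(i)\;s_k(j)]\,z\}$, so the whole content of the lemma is to identify which of the two central translates occurs.

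The base case $j=i+1$, where $[i\;i+1]=t_i$, splits into three subcases. If $k=i$, then $t_i^2=1$ gives $s_i\triangleright t_i=t_i$, matching $[i+1\;i]\,z=t_i z^2=t_i$. If $\lvert k-i\rvert=1$, the braid relation $(t_jt_{j+1})^3=1$ yields $t_kt_it_k=t_it_kt_i$, which, unpacking Definition \ref{defn:permutations}, is exactly $[s_k(i)\;s_k(i+1)]\,z$. If $\lvert k-i\rvert\geq2$, the relation $(t_kt_l)^2=z$ gives $t_kt_it_k=t_i\,z$, and since $s_k$ fixes both $i$ and $i+1$ in this range, this equals $[s_k(i)\;s_k(i+1)]\,z$ on the nose.

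For the inductive step with $i+1<j$, I would apply $s_k\triangleright$ to the defining recursion $[i\;j]=t_i\triangleright[i+1\;j]\cdot z$. Because conjugation is a group automorphism and $z$ is central and fixed, this expands to
\[
s_k\triangleright[i\;j]\;=\;(s_k\triangleright t_i)\triangleright(s_k\triangleright[i+1\;j])\cdot z.
\]
Feeding in the base case $s_k\triangleright t_i=[s_k(i)\;s_k(i+1)]\,z$ and the inductive hypothesis $s_k\triangleright[i+1\;j]=[s_k(i+1)\;s_k(j)]\,z$, and pulling the central $z$-factors out of the conjugation (using $z^2=1$), the desired identity collapses to the auxiliary claim
\[
[a\;b]\triangleright[b\;c]\;=\;[a\;c]\,z
\]
for all pairwise distinct $a,b,c\in\{1,\dots,n\}$.

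The main obstacle is proving this auxiliary identity, which encodes precisely the rack 2-cocycle structure that the Schur cover imposes on the lifts of the transpositions. I would prove it by a secondary induction based on Definition \ref{defn:permutations}, with a case analysis on the ordering of $\{a,b,c\}$: the essential base subcase $b=a+1<c$ is immediate from $[a\;c]=t_a\triangleright[a+1\;c]\,z$, and the remaining orderings reduce to it via the flip relation $[j\;i]=[i\;j]\,z$, tracking the central factor at each step.
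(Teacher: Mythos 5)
Your overall architecture --- induction on $\lvert j-i\rvert$, with a complete base case $j=i+1$ handled directly from the three defining relations of $T_n$, and an inductive step obtained by conjugating the recursion $[i\;j]=t_i\trid[i+1\;j]\,z$ --- is sound and genuinely different from the paper's proof, which instead fixes $i<j$ and runs a case analysis on the position of $k$ relative to $\{i,j\}$, quoting Schur's original computation for the generic case $\{k,k+1\}\cap\{i,j\}=\emptyset$ and settling the four adjacent cases from Definition \ref{defn:permutations} and the braid relations. Your base case is correct in all three subcases, and your reduction of the inductive step to the identity $[a\;b]\trid[b\;c]=[a\;c]\,z$ with $a=s_k(i)$, $b=s_k(i+1)$, $c=s_k(j)$ is also correctly carried out; that identity is true.

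The gap is in your proof of this auxiliary identity. You assert that all cases reduce to the subcase $b=a+1<c$ via the flip relation $[j\;i]=[i\;j]\,z$, but flips only swap the two entries inside one bracket at the cost of a central $z$; they cannot change the gap $\lvert b-a\rvert$ or the position of $c$ relative to $a$ and $b$. The instances your induction actually produces are not all of the form $b=a+1<c$: when $k=i+1$ (resp.\ $k=i-1$) you get $(a,b)=(i,i+2)$ (resp.\ $(i-1,i+1)$), so $\lvert b-a\rvert=2$, and when $j=i+2$ and $k=i+1$ you even get $a<c<b$. To handle $[a\;a+2]\trid[a+2\;c]$ you must expand $[a\;a+2]=t_at_{a+1}t_a\,z$ and conjugate in three stages, and the inner stages are themselves instances of the lemma being proved (conjugation of a general $[x\;y]$ by an adjacent transposition) --- of smaller spread in some orderings but of the \emph{same} spread in others, so a simultaneous induction is circular unless the stages are unwound in the right order. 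None of this bookkeeping appears in your sketch, and it is exactly where the content of the lemma lives; as written, the ``secondary induction \dots via the flip relation'' does not close.
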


\begin{proof}
Multiplying both sides by $z$ if needed, we may assume that $i<j$. If
$\{k,k+1\}\cap\{i,j\}=\emptyset$ then the claim follows from \cite[Paragraph 6,
III]{MR1820589}.  If $k=i-1$ then the claim follows from Definition
\ref{defn:permutations}. The case $k=i$ follows from the case $k=i-1$ by
applying $s_{i-1}$. Since $s_j\trid t_{j-1}=s_{j-1}\trid t_j$, a
straightforward computation settles the case $k=j$. Finally, the case $k=j-1$
follows from the case $k=j$ by applying $s_j$.
\end{proof}


\begin{pro}
\label{pro:conjugation}
Let $l\in\N$, $\sigma=s_{i_1}s_{i_2}\cdots s_{i_l}\in\Sym_n$ and $i,j\in\{1,...,n\}$. 
Then
\[
\sigma\trid[i\;j]=[\sigma(i)\;\sigma(j)]z^l.
\]
\end{pro}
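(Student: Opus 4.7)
The plan is to argue by induction on the length $l$ of the expression $\sigma = s_{i_1} s_{i_2} \cdots s_{i_l}$, using Lemma \ref{lem:general} as the base case and the centrality of $z$ (from Theorem \ref{thm:schur}) to move powers of $z$ past conjugation.

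For the base case $l = 1$, the statement is exactly the content of Lemma \ref{lem:general}. For the inductive step, suppose the result holds for all words of length $l-1$. Write $\sigma = s_{i_1} \sigma'$, where $\sigma' = s_{i_2} \cdots s_{i_l}$. By Remark \ref{rem:conjugation}, conjugation in $T_n$ defines a group action of $\Sym_n$, so
\[
\sigma \trid [i\;j] = s_{i_1} \trid (\sigma' \trid [i\;j]).
\]
Applying the inductive hypothesis to $\sigma'$ gives $\sigma' \trid [i\;j] = [\sigma'(i)\;\sigma'(j)] z^{l-1}$. Since $z$ is central in $T_n$, conjugation commutes with multiplication by $z$, hence
\[
s_{i_1} \trid \bigl([\sigma'(i)\;\sigma'(j)] z^{l-1}\bigr) = \bigl(s_{i_1} \trid [\sigma'(i)\;\sigma'(j)]\bigr) z^{l-1}.
\]
Finally, Lemma \ref{lem:general} yields $s_{i_1} \trid [\sigma'(i)\;\sigma'(j)] = [s_{i_1}\sigma'(i)\;s_{i_1}\sigma'(j)] z = [\sigma(i)\;\sigma(j)] z$, and combining this with the previous display gives $\sigma \trid [i\;j] = [\sigma(i)\;\sigma(j)] z^l$, completing the induction.

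There is no real obstacle here: the argument is a straightforward induction once one observes that the exponent of $z$ tracks the length of the chosen word (not the length of $\sigma$ itself, which is why the result is stated for an arbitrary expression rather than a reduced one). The only subtlety worth flagging is that $\sigma \trid [i\;j]$ genuinely depends only on $\sigma \in \Sym_n$ (by Remark \ref{rem:conjugation}), while the right-hand side $[\sigma(i)\;\sigma(j)] z^l$ depends on the parity of $l$; consistency of the formula across different expressions for $\sigma$ will then force two expressions of $\sigma$ to have lengths of the same parity, a fact that is of course well known for Coxeter groups but which the proposition implicitly reconfirms.
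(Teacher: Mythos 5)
Your proof is correct and follows exactly the route the paper indicates: the paper's own proof is the one-line statement that the proposition ``follows from Lemma \ref{lem:general} by induction on $l$,'' and your argument simply fills in the details of that induction (base case from Lemma \ref{lem:general}, inductive step via the decomposition $\sigma=s_{i_1}\sigma'$, centrality of $z$, and the action property from Remark \ref{rem:conjugation}). Your closing observation that the formula implicitly reconfirms that the parity of the word length is an invariant of $\sigma$ is a nice sanity check, though not needed for the proof.
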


\begin{proof}
Follows from Lemma \ref{lem:general} by induction on $l$.
\end{proof}


\subsection{Nichols algebras over symmetric groups}
Recall that $X_n$ is the rack of transpositions in $\Sym_n$. There exist two
rack $2$-cocycles that we want to consider.  One of these rack $2$-cocycles is
the constant cocycle $-1$. The other one is the $2$-cocycle given by Equation 
\eqref{eq:FK_cocycle}.
%


\begin{lem}
\label{lem:repr}
Let $A$ be an abelian group and let $\psi:A\to\C^\times$ be a group homomorphism. For $\phi\in
Z^2(\mathbb{S}_n,A)$ define $\phi_\psi(x,y)=\psi(\phi(x,y))$. Then
$\phi_\psi\in Z^2(\Sym_n,\C^\times)$.
\end{lem}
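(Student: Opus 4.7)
The plan is to verify directly that $\phi_\psi$ satisfies the $2$-cocycle condition with values in $\C^\times$, by pushing the condition for $\phi$ forward along the homomorphism $\psi$.

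First I would write down the $2$-cocycle identity satisfied by $\phi\in Z^2(\Sym_n,A)$ in multiplicative form: for all $x,y,z\in\Sym_n$,
\[
\phi(x,y)\,\phi(xy,z)=\phi(x,yz)\,\phi(y,z),
\]
an equality in the abelian group $A$. Next I would apply the group homomorphism $\psi\colon A\to\C^\times$ to both sides. Since $\psi$ preserves products, we obtain
\[
\psi(\phi(x,y))\,\psi(\phi(xy,z))=\psi(\phi(x,yz))\,\psi(\phi(y,z)),
\]
which by the definition $\phi_\psi(x,y)=\psi(\phi(x,y))$ is exactly the $2$-cocycle condition
\[
\phi_\psi(x,y)\,\phi_\psi(xy,z)=\phi_\psi(x,yz)\,\phi_\psi(y,z).
\]
Hence $\phi_\psi\in Z^2(\Sym_n,\C^\times)$.

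There is no real obstacle here: the statement is essentially the functoriality of group cohomology in the coefficient argument, namely that a homomorphism $A\to\C^\times$ induces a map $Z^2(\Sym_n,A)\to Z^2(\Sym_n,\C^\times)$. The only thing to check is that $\psi$ is multiplicative, which is given by hypothesis.
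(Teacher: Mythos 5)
Your proof is correct. The paper itself gives no argument for this lemma, delegating it to a reference (Berkovich--Zhmud, Chapter 6, \S 3); what you wrote is precisely the standard verification behind that citation, namely that a coefficient homomorphism $\psi\colon A\to\C^\times$ pushes the multiplicative $2$-cocycle identity forward termwise. The only point left implicit, which is worth a half-sentence, is that the cocycles here are taken with respect to the \emph{trivial} action of $\Sym_n$ on $A$ and on $\C^\times$ (as is the case throughout the paper, since $\phi$ arises from a central extension), so there is no compatibility of $\psi$ with a group action to check; with a nontrivial action one would additionally need $\psi$ to be $\Sym_n$-equivariant.
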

\begin{proof}
Follows from \cite[Chapter 6, $\S$3]{MR1486039}.
\end{proof}

\begin{lem}
\label{lem:section}
There exists a section $s:\Sym_n\to T_n$ such that if $\tau=(i\;j)$, $i<j$, 
then 
\begin{equation}
	s(\sigma)\trid s(\tau)=
	\begin{cases}
		s(\sigma\trid\tau)z & \text{if $\sigma(i)<\sigma(j)$},\\
		s(\sigma\trid\tau) & \text{if $\sigma(i)>\sigma(j)$}
	\end{cases}
\end{equation}
for all $\sigma$.
\end{lem}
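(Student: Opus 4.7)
The plan is to define the section explicitly on transpositions, extend it arbitrarily elsewhere, and reduce the verification of the formula to a parity calculation using Proposition \ref{pro:conjugation}.

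For each transposition $\tau = (i\;j)$ with $i<j$ I would set $s(\tau) = [i\;j]$, the canonical lift provided by Definition \ref{defn:permutations}. For the remaining elements of $\Sym_n$ one may choose any set-theoretic preimage under $p$: by Remark \ref{rem:conjugation} the two possible lifts of a given $\sigma$ differ by the central element $z$, so conjugation by $s(\sigma)$ in $T_n$ depends only on $\sigma$ itself. In particular, for any transposition $\sigma$ and any $\tau = (i\;j)$ with $i<j$,
\[
s(\sigma)\trid s(\tau) = \sigma\trid [i\;j].
\]

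Next I would invoke Proposition \ref{pro:conjugation}, which gives
\[
\sigma\trid [i\;j] = [\sigma(i)\;\sigma(j)]\, z^{l},
\]
where $l$ is the number of adjacent transpositions in any expression $\sigma = s_{i_1}\cdots s_{i_l}$. Since $\sigma$ is a transposition, hence an odd permutation, every such $l$ is odd and $z^{l}=z$. It then remains to split into the two cases required by the statement. If $\sigma(i) < \sigma(j)$, then $[\sigma(i)\;\sigma(j)]$ is the canonical lift of the transposition $\sigma\trid\tau$, so it coincides with $s(\sigma\trid\tau)$, and the formula $s(\sigma)\trid s(\tau) = s(\sigma\trid\tau)\,z$ drops out. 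If $\sigma(i) > \sigma(j)$, the convention $[j'\;i'] = [i'\;j']\,z$ of Definition \ref{defn:permutations} gives $[\sigma(i)\;\sigma(j)] = s(\sigma\trid\tau)\,z$, and combining this with the earlier factor of $z$ yields $s(\sigma)\trid s(\tau) = s(\sigma\trid\tau)\,z^2 = s(\sigma\trid\tau)$ using $z^2=1$.

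There is no genuine obstacle: the hard work has already been done in building the elements $[i\;j]$ and in establishing Proposition \ref{pro:conjugation}. The only subtlety is book-keeping, namely keeping track of the two independent sources of $z$-factors (one coming from the parity of $l$ in Proposition \ref{pro:conjugation}, the other from the ordering convention $[j\;i]=[i\;j]z$ in Definition \ref{defn:permutations}), which conspire precisely to produce the asymmetry between $\sigma(i)<\sigma(j)$ and $\sigma(i)>\sigma(j)$ prescribed by \eqref{eq:FK_cocycle}.
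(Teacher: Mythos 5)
Your proposal is correct and follows essentially the same route as the paper: take $s(\tau)=[i\;j]$ on transpositions and an arbitrary lift elsewhere, use Remark \ref{rem:conjugation} and Proposition \ref{pro:conjugation} to obtain $s(\sigma)\trid s(\tau)=[\sigma(i)\;\sigma(j)]z$ for $\sigma\in X_n$, and conclude via the convention $[j\;i]=[i\;j]z$. If anything, your remark that the exponent $l$ is merely odd (because $\sigma$ is an odd permutation) is stated more carefully than the paper's ``the length of $\sigma$ is $1$''.
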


\begin{proof}
	By Theorem \ref{thm:schur} there exists a central extension
	\[
	0\to A\xrightarrow{i} T_n\xrightarrow{p}\Sym_n\to1,
	\]
	where $A=\langle z\rangle$. Take any set-theoretical section $\bar
	s:\Sym_n\to T_n$ such that $\bar s(\id)=1$ and define a map 
	$s:\Sym_n\to T_n$ by 
	\begin{equation}
		s(\pi)=\begin{cases}
			\bar s(\pi) & \text{if $\pi\notin X_n$},\\
			[i\;j] & \text{if $\pi=(i\;j)\in X_n$, with $i<j$}.
		\end{cases}
	\end{equation}
	Then $ps=\id$ and $s(\id)=1$. 
	Since $\sigma\in X_n$, the length of $\sigma$ is $1$.  Remark
	\ref{rem:conjugation} and Proposition \ref{pro:conjugation} imply that
	\[
	s(\sigma)\trid s(\tau)=\sigma\trid s(\tau)=\sigma[i\;j]\sigma^{-1}=[\sigma(i)\;\sigma(j)]z.
	\]
	Hence the claim follows.
\end{proof}

\begin{thm}
Let $n\geq4$. The rack $2$-cocycles $\chi$ and $-1$ associated to $X_n$ are twist equivalent.
Hence the Hilbert series of $\toba(X_n, -1)$ and $\toba(X_n, \chi)$ are equal.
\end{thm}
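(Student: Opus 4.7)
The plan is to construct a group $2$-cocycle on $\Sym_n$, with values in $\C^\times$, whose restriction to $X_n\times X_n$ twists the constant cocycle $-1$ into $\chi$. The natural source for such a cocycle is the Schur cover $T_n\xrightarrow{p}\Sym_n$ of Theorem \ref{thm:schur}: any set-theoretic section $s:\Sym_n\to T_n$ determines a factor set $\phi\in Z^2(\Sym_n,A)$ via $s(\sigma)s(\tau)=\phi(\sigma,\tau)\,s(\sigma\tau)$. Since $A=\langle z\rangle$ has order two, the unique nontrivial character $\psi:A\to\C^\times$ with $\psi(z)=-1$ converts $\phi$ into $\phi_\psi\in Z^2(\Sym_n,\C^\times)$ by Lemma \ref{lem:repr}, and so its restriction to $X_n\times X_n$ automatically satisfies \eqref{eqn:condition}. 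The section I would use is the one built in Lemma \ref{lem:section}, precisely because its conjugation formula already records the comparison of $\sigma(i)$ with $\sigma(j)$ which appears in the definition of $\chi$.

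The core step is to evaluate the twist on $X_n\times X_n$. For $\sigma\in X_n$ and $\tau=(i\;j)\in X_n$ with $i<j$, I would rewrite $s(\sigma)\trid s(\tau)=s(\sigma)s(\tau)s(\sigma)^{-1}$ by expanding $s(\sigma)s(\tau)=\phi(\sigma,\tau)\,s(\sigma\tau)$ and $s(\sigma\trid\tau)\,s(\sigma)=\phi(\sigma\trid\tau,\sigma)\,s(\sigma\tau)$, using that $(\sigma\trid\tau)\sigma=\sigma\tau$, to obtain
\[
s(\sigma)\trid s(\tau)=\phi(\sigma,\tau)\phi(\sigma\trid\tau,\sigma)^{-1}\,s(\sigma\trid\tau).
\]
Comparing with Lemma \ref{lem:section} then yields
$\phi(\sigma,\tau)\phi(\sigma\trid\tau,\sigma)^{-1}=z$ when $\sigma(i)<\sigma(j)$ and $1$ when $\sigma(i)>\sigma(j)$. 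Applying $\psi$ and plugging into the twist formula \eqref{eqn:twisting} with $q\equiv-1$ produces the values $+1$ and $-1$ in exactly the two cases of \eqref{eq:FK_cocycle}, so $(-1)^{\phi_\psi}=\chi$ on $X_n\times X_n$.

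Once this identification is established, the statement about Hilbert series follows immediately from Lemma \ref{lem:series} applied to the pair $(q,\phi_\psi)$. The main conceptual difficulty, rather than a genuine calculational obstacle, is recognizing that the Fomin--Kirillov sign in $\chi$ is the same sign that appears when lifting transpositions through the Schur cover; the computation itself is then essentially formal, with all the real work hidden in the construction of the section $s$ via the explicit elements $[i\;j]$ of Definition \ref{defn:permutations} and the conjugation formula of Proposition \ref{pro:conjugation}.
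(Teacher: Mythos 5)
Your proposal is correct and follows essentially the same route as the paper: the Schur cover $T_n$, the section of Lemma \ref{lem:section}, the induced factor set pushed forward by $\psi(z)=-1$, and the identity $s(\sigma)\trid s(\tau)=\phi(\sigma,\tau)\phi(\sigma\trid\tau,\sigma)^{-1}s(\sigma\trid\tau)$ are exactly the ingredients of the paper's argument. The only (immaterial) difference is the direction of the twist --- you show $(-1)^{\phi_\psi}=\chi$ while the paper verifies $\chi^{\phi_\psi}=-1$; since the twisting factor takes values in $\{\pm1\}$ these are equivalent.
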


\begin{proof}
By Theorem \ref{thm:schur} there exists a central extension
\[
0\to A\xrightarrow{i} T_n\xrightarrow{p}\Sym_n\to1,
\]
where $A=\langle z\rangle$. Let $s:\Sym_n\to T_n$ be the section of Lemma
\ref{lem:section} and let $\phi(x,y)\in A$ be defined by the equation
\[
s(x)s(y)=i(\phi(x,y))s(xy).
\]
Then $\phi\in Z^2(\Sym_n,A)$. For any group homomorphism $\psi:A\to\C^\times$ 
apply Lemma \ref{lem:repr} to get a group $2$-cocycle $\phi_\psi\in Z^2(\Sym_n,\C^\times)$.
Take $\psi=\sgn$, where $\psi(z)=-1$. We claim that 
\[
-1 = \phi_\psi(\sigma,\tau)\phi_\psi(\sigma\trid\tau,\sigma)^{-1}\chi(\sigma,\tau)
\]
for all $\sigma,\tau\in X_n$. In fact,
\begin{align*}
	\phi_\psi(\sigma,\tau)&\phi_\psi(\sigma\trid\tau,\sigma)^{-1}\\
	&= \psi\left( s(\sigma)s(\tau)s(\sigma\tau)^{-1}\right)\psi\left( s(\sigma\tau\sigma^{-1})s(\sigma)s(\sigma\tau)^{-1})\right)^{-1}\\
	& = \psi\left( s(\sigma)s(\tau)s(\sigma\tau)^{-1}(s(\sigma\tau\sigma^{-1})s(\sigma)s(\sigma\tau)^{-1})^{-1}\right)\\
	& = \psi\left( s(\sigma)s(\tau)s(\sigma\tau)^{-1}s(\sigma\tau)s(\sigma)^{-1}s(\sigma\tau\sigma^{-1})^{-1}  \right)\\
	& = \psi\left( s(\sigma)s(\tau)s(\sigma)^{-1}s(\sigma\tau\sigma^{-1})^{-1}\right) 
\end{align*}
Therefore the first claim follows from Lemma \ref{lem:section}. For the second claim use 
Lemma \ref{lem:series}.
\end{proof}

\subsection*{Acknowledgement}
I am grateful to Mat\'\i as Gra\~na for valuable comments and discussions and 
to Nicol\'as Andruskiewitsch for useful discussions and for suggesting me this
project. I thank Istv\'an Heckenberger for his careful reading of this work. 
I also thank the referee for suggesting Lemma \ref{lem:general} which simplified 
the original presentation of the paper.

\section*{References}

\begin{biblist}

\bib{AFGV}{article}{
   author={Andruskiewitsch, Nicol{\'a}s},
   author={Fantino, Fernando},
   author={Garc{\'{\i}}a, Gast{\'o}n Andr{\'e}s},
   author={Vendramin, Leandro},
   title={On Nichols algebras associated to simple racks}
   journal={Contemp. Math},
   volume={537},
   date={2011},
   pages={31--56},
}

\bib{ampa}{article}{
   author={Andruskiewitsch, Nicol{\'a}s},
   author={Fantino, Fernando},
   author={Gra\~na, Mat\'ias},
   author={Vendramin, Leandro},
   title={Finite-dimensional pointed Hopf algebras with alternating groups are trivial},
   journal={Ann. Mat. Pura Appl. (4)}
   volume={190},
   date={2011},
   number={2},
   pages={225--245},
}

\bib{MR1994219}{article}{
   author={Andruskiewitsch, N.},
   author={Gra{\~n}a, M.},
   title={From racks to pointed Hopf algebras},
   journal={Adv. Math.},
   volume={178},
   date={2003},
   number={2},
   pages={177--243},
   issn={0001-8708},
}


\bib{MR1913436}{article}{
   author={Andruskiewitsch, N.},
   author={Schneider, H.-J.},
   title={Pointed Hopf algebras},
   conference={
      title={New directions in Hopf algebras},
   },
   book={
      series={Math. Sci. Res. Inst. Publ.},
      volume={43},
      publisher={Cambridge Univ. Press},
      place={Cambridge},
   },
   date={2002},
   pages={1--68},
}

\bib{MR2630042}{article}{
   author={Andruskiewitsch, N.},
   author={Schneider, H.-J.},
   title={On the classification of finite-dimensional pointed Hopf algebras},
   journal={Ann. of Math. (2)},
   volume={171},
   date={2010},
   number={1},
   pages={375--417},
   issn={0003-486X},
}

\bib{MR2209265}{article}{
   author={Bazlov, Yuri},
   title={Nichols-Woronowicz algebra model for Schubert calculus on Coxeter
   groups},
   journal={J. Algebra},
   volume={297},
   date={2006},
   number={2},
   pages={372--399},
   issn={0021-8693},
}
		
\bib{MR1486039}{book}{
   author={Berkovich, Ya. G.},
   author={Zhmud{\cprime}, E. M.},
   title={Characters of finite groups. Part 1},
   series={Translations of Mathematical Monographs},
   volume={172},
   publisher={American Mathematical Society},
   place={Providence, RI},
   date={1998},
   pages={xxiv+382},
   isbn={0-8218-4606-X},
}

\bib{MR0051508}{article}{
   author={Borel, Armand},
   title={Sur la cohomologie des espaces fibr\'es principaux et des espaces
   homog\`enes de groupes de Lie compacts},
   language={French},
   journal={Ann. of Math. (2)},
   volume={57},
   date={1953},
   pages={115--207},
   issn={0003-486X},
}


\bib{MR1990571}{article}{
   author={Carter, J. Scott},
   author={Jelsovsky, Daniel},
   author={Kamada, Seiichi},
   author={Langford, Laurel},
   author={Saito, Masahico},
   title={Quandle cohomology and state-sum invariants of knotted curves and
   surfaces},
   journal={Trans. Amer. Math. Soc.},
   volume={355},
   date={2003},
   number={10},
   pages={3947--3989},
   issn={0002-9947},
}

\bib{MR1298746}{article}{
   author={Doi, Yukio},
   author={Takeuchi, Mitsuhiro},
   title={Multiplication alteration by two-cocycles---the quantum version},
   journal={Comm. Algebra},
   volume={22},
   date={1994},
   number={14},
   pages={5715--5732},
   issn={0092-7872},
}
		
\bib{MR1667680}{article}{
   author={Fomin, Sergey},
   author={Kirillov, Anatol N.},
   title={Quadratic algebras, Dunkl elements, and Schubert calculus},
   conference={
      title={Advances in geometry},
   },
   book={
      series={Progr. Math.},
      volume={172},
      publisher={Birkh\"auser Boston},
      place={Boston, MA},
   },
   date={1999},
   pages={147--182},
}

\bib{GG}{article}{
author={Garc\'ia, Gaston Andres},
author={Garc\'ia Iglesias, Agust\'in},
title={Finite dimensional pointed Hopf algebras over $\Sym_4$},
journal={Israel J. Math.}
  volume={183},
   date={2011},
   pages={417--444},
  
}

\bib{MR1800709}{article}{
   author={Gra{\~n}a, Mat{\'{\i}}as},
   title={On Nichols algebras of low dimension},
   conference={
      title={New trends in Hopf algebra theory},
      address={La Falda},
      date={1999},
   },
   book={
      series={Contemp. Math.},
      volume={267},
      publisher={Amer. Math. Soc.},
      place={Providence, RI},
   },
   date={2000},
   pages={111--134},
}

\bib{zoo}{misc}{
author={Gra{\~n}a, Mat{\'{\i}}as},
title={Nichols algebras of non-abelian group type: zoo of examples},
note={Web page available at \url{http://mate.dm.uba.ar/~matiasg/zoo}}
}

\bib{GHV}{article}{
author={Gra\~na, M.},
author={Heckenberger, I.},
author={Vendramin, L.},
title={Nichols algebras of group type with many quadratic relations},
note={Accepted for publication in Adv. Math., \texttt{DOI:10.1016/j.aim.2011.04.006}}
}

\bib{MR2207786}{article}{
   author={Heckenberger, I.},
   title={The Weyl groupoid of a Nichols algebra of diagonal type},
   journal={Invent. Math.},
   volume={164},
   date={2006},
   number={1},
   pages={175--188},
   issn={0020-9910},
}

\bib{MR2462836}{article}{
   author={Heckenberger, I.},
   title={Classification of arithmetic root systems},
   journal={Adv. Math.},
   volume={220},
   date={2009},
   number={1},
   pages={59--124},
   issn={0001-8708},
}

\bib{MR1200015}{book}{
   author={Karpilovsky, Gregory},
   title={The Schur multiplier},
   series={London Mathematical Society Monographs. New Series},
   volume={2},
   publisher={The Clarendon Press Oxford University Press},
   place={New York},
   date={1987},
   pages={x+302},
   isbn={0-19-853554-6},
}

\bib{MR2106930}{article}{
   author={Majid, Shahn},
   title={Noncommutative differentials and Yang-Mills on permutation groups
   $S_n$},
   conference={
      title={Hopf algebras in noncommutative geometry and physics},
   },
   book={
      series={Lecture Notes in Pure and Appl. Math.},
      volume={239},
      publisher={Dekker},
      place={New York},
   },
   date={2005},
   pages={189--213},
}
		
\bib{MR1711340}{article}{
   author={Majid, Shahn},
   author={Oeckl, Robert},
   title={Twisting of quantum differentials and the Planck scale Hopf
   algebra},
   journal={Comm. Math. Phys.},
   volume={205},
   date={1999},
   number={3},
   pages={617--655},
   issn={0010-3616},
}

\bib{MR1800714}{article}{
   author={Milinski, Alexander},
   author={Schneider, Hans-J{\"u}rgen},
   title={Pointed indecomposable Hopf algebras over Coxeter groups},
   conference={
      title={New trends in Hopf algebra theory},
      address={La Falda},
      date={1999},
   },
   book={
      series={Contemp. Math.},
      volume={267},
      publisher={Amer. Math. Soc.},
      place={Providence, RI},
   },
   date={2000},
   pages={215--236},
}

\bib{MR1820589}{article}{
   author={Schur, J.},
   title={On the representation of the symmetric and alternating groups by
   fractional linear substitutions},
   note={Translated from the German [J.\ Reine Angew.\ Math.\ {\bf 139}
   (1911), 155--250] by Marc-Felix Otto},
   journal={Internat. J. Theoret. Phys.},
   volume={40},
   date={2001},
   number={1},
   pages={413--458},
   issn={0020-7748},
}


\bib{MR2562037}{book}{
   author={Wilson, Robert A.},
   title={The finite simple groups},
   series={Graduate Texts in Mathematics},
   volume={251},
   publisher={Springer-Verlag London Ltd.},
   place={London},
   date={2009},
   pages={xvi+298},
   isbn={978-1-84800-987-5},
}

%

\end{biblist}
\end{document}